\setlist{nosep}
\numberwithin{equation}{section}
\theoremstyle{plain}
\newtheorem{theorem}{Theorem}[section]
\newtheorem{lemma}[theorem]{Lemma}
\newtheorem{proposition}[theorem]{Proposition}
\theoremstyle{definition}
\newtheorem{definition}[theorem]{Definition}
\theoremstyle{remark}
\newtheorem{remark}[theorem]{Remark}
\crefname{theorem}{Theorem}{Theorems}
\Crefname{theorem}{Theorem}{Theorems}
\crefname{lemma}{Lemma}{Lemmas}
\Crefname{lemma}{Lemma}{Lemmas}
\crefname{proposition}{Proposition}{Propositions}
\Crefname{proposition}{Proposition}{Propositions}
\crefname{corollary}{Corollary}{Corollaries}
\Crefname{corollary}{Corollary}{Corollaries}
\crefname{definition}{Definition}{Definitions}
\Crefname{definition}{Definition}{Definitions}
\crefname{remark}{Remark}{Remarks}
\Crefname{remark}{Remark}{Remarks}
\newcommand{\N}{\mathbb{N}}
\newcommand{\ZFC}{\mathrm{ZFC}}
\newcommand{\PA}{\mathrm{PA}}
\newcommand{\PiOne}{\Pi^0_1}
\DeclareMathOperator{\Con}{Con}
\DeclareMathOperator{\Prov}{Prov}
\newcommand{\code}[1]{\ulcorner #1 \urcorner}
\newcommand{\RH}{\mathrm{RH}}
\newcommand{\BHstruct}{\mathsf{BH\text{-}struct}}
\newcommand{\BHobs}{\mathsf{BH\text{-}obs}}
\newcommand{\SDF}{\mathsf{SDF}}
\newcommand{\orcid}[1]{\href{https://orcid.org/#1}{ORCID: #1}}
\title[Nonstandard witnesses and observational barriers]{Nonstandard Witnesses and Observational Barriers for $\PiOne$ Sentences in $\ZFC$:\\
Standard Cuts, Uniform Reflection Failure, and the Semantic Void}
\author{Yusei Fukumoto\thanks{\orcid{0009-0002-8963-5741}}}
\address{Ritsumeikan University, Kyoto, Japan}
\email{is0844fx@ed.ritsumei.ac.jp}
\subjclass[2020]{03C62, 03F30, 03D35, 03F40, 11M26}
\keywords{$\Pi^0_1$ sentences, standard cut, computational inaccessibility, uniform reflection, semantic void, Riemann Hypothesis}
\begin{document}

\begin{abstract}
We isolate a model-theoretic ``standard-cut'' phenomenon for \emph{true} $\Pi^0_1$ sentences: if $M\models\ZFC+\neg\varphi$, then $\omega^M\neq\omega$, hence any internal ``witness'' to $\neg\varphi$ is \emph{computationally inaccessible} by Tennenbaum's theorem. Such a witness exists only to maintain syntactic consistency and carries no standard observational semantics.

On the proof-theoretic side, we attribute the gap between pointwise verifiability and global provability to a gap in \emph{Uniform Reflection}. We formalize this as a syntactic self-description failure $\SDF(T,\varphi)$ for proof systems $T$. Under this failure, we obtain an \emph{observational barrier}:
\[
\Con(T)\ \Rightarrow\ \neg\Prov_T(\code{\varphi}).
\]
In this sense, undecidability in $\ZFC$ for $\Pi^0_1$ sentences does not describe any observable mathematical reality; it marks a \emph{semantic void}---a shadow cast by the expressive limitations of the formal system, rather than the presence of a standard counterexample. We illustrate this with a fixed arithmetical representative of the Riemann Hypothesis.
\end{abstract}

\maketitle

\section{Introduction}

For $\Pi^0_1$ sentences $\varphi\equiv\forall n\,P(n)$, the landscape of ``truth'' and ``provability'' splits sharply along the \textbf{standard cut}: finite computations inhabit $\omega$, while potential counterexamples to $\neg\varphi$ in nonstandard models inhabit $\omega^M\setminus\omega$ and are thus \textbf{computationally inaccessible}. This is the precise model-theoretic analogue of an ``event horizon'': everything observable happens below the standard cut.

Proof-theoretically, $\Pi^0_1$ sentences exhibit a characteristic gap: finite instances $P(0),\dots,P(N)$ are uniformly checkable, yet the aggregation to $\forall n\,P(n)$ depends on the system's reflection principles. This \textbf{failure of uniformization} is the source of the ``local truth vs.\ global unprovability'' phenomenon.

The purpose of this paper is to make both sides precise without invoking nonstandard analysis or physical metaphors directly, but rather by using them as a conceptual guide:
\begin{itemize}
\item a structural fact ($\BHstruct$) showing that any failure of a true $\Pi^0_1$ sentence must lie beyond the standard cut;
\item an observational barrier ($\BHobs$) showing that proof systems with a specific syntactic self-description failure cannot prove $\varphi$.
\end{itemize}

\section{Positioning and Novelty: Comparison with Known Results}\label{sec:novelty}

The claims of this paper are not mere renamings of classical model theory or proof theory. Instead, we introduce three structural ingredients and explain how they combine into the ``semantic void'' phenomenon:

\begin{enumerate}[label=(\roman*)]
  \item the explicit classification predicate $\BHstruct$ for true $\Pi^0_1$ sentences, recording where counterexamples may live inside $\ZFC$-models;
  \item the target-sensitive self-description failure $\SDF(T,\varphi)$ that isolates when Uniform Reflection breaks down for a fixed $\Pi^0_1$ sentence;
  \item the synthesis of these two notions into the observational barrier $\BHobs_T(\varphi)$, which packages the semantic void as a structural theorem rather than as an informal metaphor.
\end{enumerate}
Each item draws on well-known facts, but the combination produces a new taxonomy for $\Pi^0_1$ undecidability. We spell out the comparison in detail.

\subsection*{(A) $\BHstruct$: a new classification of $\ZFC$-models}

\paragraph{Known core.} $\Delta^0_0$-absoluteness for $\omega$-standard models is textbook material \cite{Kaye}, and Tennenbaum's theorem shows that countable nonstandard models of $\PA$ have no computable presentation \cite{Tennenbaum}. From these facts, experts have long known the folklore conclusion: if a true $\Pi^0_1$ sentence $\varphi\equiv\forall n\,P(n)$ fails in some model, any witnessing counterexample must live outside the standard $\omega$ and is therefore unobservable by standard computation (see also \cite{Smith} for an accessible survey).

\paragraph{New contribution.} We package that folklore as the explicit predicate
\[
\BHstruct(\varphi)\iff\forall M\,(M\models\ZFC+\neg\varphi\ \Rightarrow\ \omega^M\neq\omega),
\]
introduced in \cref{def:BHstruct}. This creates a binary classification of $\ZFC$-models relative to $\varphi$ and allows us to cite \cref{prop:Pi01-nonstandard} as a reusable structural fact: any failure of a true $\Pi^0_1$ sentence is forced into a computably inaccessible region of any model that witnesses it. Thus $\BHstruct$ is more than a reformulation—it supplies a referenceable structural invariant for subsequent arguments.

\subsection*{(B) $\SDF(T,\varphi)$: extracting a property invisible to traditional reflection schemas}

\paragraph{Known core.} Reflection principles—local or uniform—are classically developed in work by Hájek and Pudlák, Beklemishev, Feferman, Boolos, and many others \cite{HaP,Beklemishev05,Feferman62,Boolos}. They typically connect the provability of $\forall n\,P(n)$ to the aggregation of instance-wise proofs of $P(\bar n)$ via uniform reflection, under soundness assumptions such as $\Sigma^0_1$-soundness.

\paragraph{New contribution.} We isolate, for a fixed $\Pi^0_1$ sentence $\varphi\equiv\forall n\,P(n)$, the failure pattern
\[
\SDF(T,\varphi)\;:\Longleftrightarrow\;\bigl[\forall n\;T\vdash P(\bar n)\bigr]\ \wedge\ \bigl[T\nvdash \forall n\,\Prov_T(\ulcorner P(\bar n)\urcorner)\bigr],
\]
as defined in \cref{def:SDF}. This notion does not assume a full reflection schema. It merely records that $T$ can prove every finite instance while being unable to certify, inside itself, that this pointwise ability holds uniformly. The barrier theorem \cref{thm:barrier} then shows that $\SDF(T,\varphi)$ alone prevents $T$ from proving $\varphi$.

\begin{proposition}[Compatibility with established reflection theory]\label{prop:barrier-compat}
Let $T$ be a recursively axiomatized extension of $I\Sigma_1$ with the standard provability predicate $\Prov_T$. If $P$ is primitive recursive, then by formalized $\Sigma^0_1$-completeness \cite[§§I.1–I.2]{HaP} we have
\[
T\vdash \forall n\,\bigl(P(\bar n)\rightarrow \Prov_T(\ulcorner P(\bar n)\urcorner)\bigr).
\]
Consequently, if $T\vdash\forall n\,P(n)$ then $T\vdash \forall n\,\Prov_T(\ulcorner P(\bar n)\urcorner)$. Therefore, whenever $\SDF(T,\varphi)$ holds, \cref{thm:barrier} forces $T\nvdash\forall n\,P(n)$. In particular, $\SDF$ is fully consistent with the classical implications from uniform reflection to provability.
\end{proposition}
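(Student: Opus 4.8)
The plan is to treat the displayed formalized $\SigmaOne$-completeness statement as the hinge: derive it first, push it through a single application of first-order logic to obtain the ``Consequently'' clause, and then read the barrier conclusion off the second conjunct of $\SDF(T,\varphi)$ by contraposition. We do not reprove \cref{thm:barrier}; the aim is only to verify that, for primitive recursive $P$, its conclusion $T\nvdash\varphi$ is \emph{implied by} — hence a fortiori consistent with — the classical chain that runs from pointwise provability through uniform reflection to provability.

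First I would establish
\[
T\vdash\forall n\,\bigl(P(\bar n)\rightarrow\Prov_T(\code{P(\bar n)})\bigr).
\]
Since $P$ is primitive recursive, each $P(\bar n)$ is, provably over $I\Sigma_1$, a closed $\SigmaOne$ sentence, and the substitution map $n\mapsto\code{P(\bar n)}$ is primitive recursive. The uniform form of provable $\SigmaOne$-completeness over $I\Sigma_1$ — specializing the $\SigmaOne$-truth predicate to this fixed $P$ as in \cite[§§I.1–I.2]{HaP} — yields the display, using $T\supseteq I\Sigma_1$; here the occurrence ``$P(\bar n)$'' to the left of the arrow is read as the open formula $P(n)$, the two readings agreeing over $I\Sigma_1$. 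The care this step needs — and I expect it to be the main (and essentially only) obstacle — is the numeral-substitution bookkeeping: $\SigmaOne$-completeness must be invoked with the primitive recursive substitution parameter, so that the conclusion is genuinely uniform in the object variable $n$ rather than a schema of instances. This is exactly the setup of \cite{HaP}, so no new work is required.

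The rest is routine logic. From $T\vdash\forall n\,P(n)$ together with the displayed $T\vdash\forall n\,(P(n)\rightarrow\Prov_T(\code{P(\bar n)}))$, universally quantified modus ponens gives $T\vdash\forall n\,\Prov_T(\code{P(\bar n)})$, which is the ``Consequently'' clause; contrapositively, $T\nvdash\forall n\,\Prov_T(\code{P(\bar n)})$ entails $T\nvdash\forall n\,P(n)$. Now suppose $\SDF(T,\varphi)$ holds. Its second conjunct is exactly $T\nvdash\forall n\,\Prov_T(\code{P(\bar n)})$, so the contrapositive just noted gives $T\nvdash\forall n\,P(n)$, i.e.\ $T\nvdash\varphi$ — the same conclusion \cref{thm:barrier} delivers from $\SDF(T,\varphi)$. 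Hence $\SDF$ cannot clash with the implication ``$T\vdash\forall n\,P(n)\Rightarrow T\vdash\forall n\,\Prov_T(\code{P(\bar n)})$'' coming from uniform reflection; under the primitive recursive hypothesis, that implication is precisely what makes the second conjunct of $\SDF$ force unprovability, which is the content of the last sentence of the statement.
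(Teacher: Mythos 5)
Your proposal is correct and follows essentially the same route as the paper: formalized $\SigmaOne$-completeness over $I\Sigma_1$, a universally quantified modus ponens to obtain $T\vdash\forall n\,\Prov_T(\code{P(\bar n)})$ from $T\vdash\forall n\,P(n)$, and contraposition against clause (b) of $\SDF(T,\varphi)$, which is exactly the argument in the proof of \cref{thm:barrier} that the proposition packages. Your extra care about reading $P(\bar n)$ as the open formula $P(n)$ and about the primitive recursive numeral-substitution parameter is a welcome refinement of a point the paper leaves implicit, but it does not change the approach.
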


\paragraph{Interpretation.} The novelty of $\SDF(T,\varphi)$ is that it isolates the \emph{minimum} self-description failure needed to trigger the barrier, without appealing to any global reflection schema. It is tailor-made for a single $\Pi^0_1$ sentence and supports constructive extraction of unprovability from pointwise data.

\subsection*{(C) The ``semantic void'': a refinement beyond independence}

\paragraph{Known core.} Since Gödel, independence of $\Pi^0_1$ sentences has been discussed via three routes: soundness assumptions in a metatheory, choices of reflection principles, and specific independence examples (surveyed in \cite{HaP,Smith}). These perspectives often treat undecidability as indicating the presence of a hidden counterexample that might materialize in some extension.

\paragraph{New contribution.} We combine $\BHstruct$ and $\SDF$ to articulate undecidability as a \emph{semantic void}. Theorem~\ref{thm:semantic-void} shows that if $\varphi$ is true in $\N$ yet undecidable in $\ZFC$, then every $\omega$-standard model satisfies $\varphi$, any falsifying model must be nonstandard, and $\ZFC$ proves every finite window while potentially failing uniform reflection. Put differently, the undecidability label signals not an unseen mathematical object but a structural shadow of the system's expressive limits.

\section{Preliminaries}

We recall two foundational results that underpin our structural analysis.

\begin{lemma}[$\Delta^0_0$-absoluteness for $\omega$-standard models \cite{Kaye}]\label{lem:abs}
If $M\models\ZFC$ and $\omega^M=\omega$, then for every primitive recursive predicate $R$ and all standard tuples $\bar a$,
\[
M\models R(\bar a)\iff V\models R(\bar a).
\]
\end{lemma}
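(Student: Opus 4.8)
The plan is to reduce the claim to the absoluteness of bounded ($\Delta^0_0$) arithmetical formulas between $V$ and $M$, a routine induction on formula complexity once the hypothesis $\omega^M=\omega$ is used to pin bounded quantifiers to the same finite sets in both structures. Concretely, I would first fix, for the given primitive recursive predicate $R$, a $\Sigma^0_1$ formula $G(\bar x,v)\equiv\exists y\,g(\bar x,v,y)$ with $g$ bounded that defines the graph of the characteristic function $\chi_R$, so that $R$ is represented by $G(\bar x,1)$ and a weak fragment of arithmetic (say $I\Sigma_1$, hence a fortiori $\ZFC$) proves $\forall\bar x\,\exists!\,v\,G(\bar x,v)$. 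Since both $V$ and $M$ satisfy this provable consequence of $\ZFC$, it then suffices to establish $M\models G(\bar a,1)\iff V\models G(\bar a,1)$ for every standard tuple $\bar a$.

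The core step is the bounded-absoluteness induction: for every $\Delta^0_0$ formula $\theta$ and every standard tuple $\bar b$ one shows $M\models\theta(\bar b)\iff V\models\theta(\bar b)$. For atomic $\theta$ this amounts to checking that arithmetical terms evaluate to the same standard value in $M$ and in $V$ and that $=$ and $<$ on standard naturals are absolute; Boolean connectives are immediate; and for a bounded quantifier $\exists x<t$ or $\forall x<t$ the term $t$ takes a standard value $n$, so $\{x:x<n\}=\{0,\dots,n-1\}$ is literally the same set in both structures — this is exactly where $\omega^M=\omega$ enters — and the induction hypothesis on each of the finitely many instances closes the case. Granting this, if $V\models G(\bar a,1)$ I pick a witness $b\in\omega$ with $V\models g(\bar a,1,b)$; since $\omega^M=\omega$ we have $b\in\omega^M$, and bounded absoluteness yields $M\models g(\bar a,1,b)$, hence $M\models G(\bar a,1)$. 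The converse is symmetric, a witness in $M$ being automatically standard because $\omega^M=\omega$. Unwinding the representation of $R$ by $G(\bar x,1)$ gives the lemma.

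The hard part — really the only point requiring care — is the atomic base case of that induction: verifying that the arithmetic structure $M$ imposes on $\omega$ (addition, multiplication, order) coincides with the true arithmetic of $\omega$. This is the substantive content of the $\omega$-standardness hypothesis and is precisely what the standard analysis of $\omega$-standard models of $\ZFC$ supplies; I would either invoke \cite{Kaye} directly, or else replay the usual argument — an external induction on the defining recursion equations for $+$ and $\cdot$ combined with the absoluteness of the successor step on standard finite ordinals.
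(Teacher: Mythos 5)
The paper does not prove this lemma at all---it is stated as a known result and delegated to the citation \cite{Kaye}---so there is no in-paper argument to compare against; your proposal reconstructs exactly the standard textbook proof that the citation stands for. Your argument is correct: representing the primitive recursive predicate by a provably total $\Sigma^0_1$ graph, running the induction on $\Delta^0_0$ formulas (with $\omega^M=\omega$ pinning bounded quantifiers and witnesses to standard values), and isolating the agreement of $+^M,\times^M$ with true arithmetic on standard arguments as the substantive base case is precisely how the absoluteness lemma is established.
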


\begin{lemma}[Tennenbaum's Theorem \cite{Tennenbaum}]\label{lem:Tennenbaum}
No countable nonstandard model of $\PA$ admits a computable presentation; in particular, neither $+^M$ nor $\times^M$ can be jointly computable in the domain of $M$.
\end{lemma}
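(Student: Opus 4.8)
The plan is to run the classical \emph{recursive-inseparability} argument: code the separating set of a pair of recursively inseparable r.e.\ sets into the binary expansion of a single element of $M$ (necessarily nonstandard) and then read that separator off effectively from any hypothetical computable presentation. So, arguing by contradiction, suppose the countable nonstandard $M\models\PA$ had a computable presentation; we may take its domain to be $\omega$ with $+^M,\times^M$ total computable, and then $\le^M$ is computable too, since ``$x\le^M y$'' and its negation ``$y<^M x$'' are each semidecidable by searching the domain for an additive witness, and exactly one of the two searches halts. Fix a standard disjoint pair $A,B$ of recursively inseparable r.e.\ sets with $\SigmaOne$ definitions $\alpha(x),\beta(x)$ over $\N$ chosen so that $\PA\vdash\forall x\,\neg(\alpha(x)\wedge\beta(x))$ --- for instance the sets of machine indices that diagonally halt with output $0$, respectively $1$, whose disjointness (determinism of computation) is routinely formalizable.

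For the coding step, let $\alpha_s,\beta_s$ denote the bounded ``within $s$ steps'' approximations of $\alpha,\beta$, so that $\PA\vdash\forall s\,\neg\exists i<s\,(\alpha_s(i)\wedge\beta_s(i))$. For each standard $s$, the $\SigmaOne$ sentence asserting the existence of a $y$ whose $i$-th binary digit equals $1$ whenever $\alpha_s(i)$ and equals $0$ whenever $\beta_s(i)$, for all $i<s$, is true in $\N$, hence provable in $\PA$, hence true in $M$. Since $M$ is nonstandard, overspill yields a nonstandard $c\in M$ at which this sentence still holds, and we fix a witness $a\in M$. Because $c$ exceeds every standard integer, we conclude, for every standard $n$, that $M\models\mathrm{bit}(\bar n,a)=1$ if $n\in A$ and $M\models\mathrm{bit}(\bar n,a)=0$ if $n\in B$.

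For the extraction step, note that for each standard $n$ the remainder $r_n:=a\bmod 2^n$ is a genuine standard integer with $r_n<2^n$, and it is the \emph{unique} $j<2^n$ for which $M\models\exists q\,(q\cdot 2^n+j=a)$ --- no other $j<2^n$ admits such a $q$, by uniqueness of Euclidean remainders, which $\PA$ proves. So, on input $n$, I would compute the domain-elements representing $(2^n)^M$ and $(0)^M,\ldots,(2^n-1)^M$ (effectively, by iterating $+^M1^M$ and $\times^M2^M$ from $0^M$), and then dovetail, over $j<2^n$ and over $q$ ranging through the domain, the equality tests $q\times^M(2^n)^M+^M(j)^M = a$; exactly one succeeds and returns $r_n$. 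Hence $n\mapsto r_n$ is total computable, so $b_n:=(r_{n+1}-r_n)/2^n\in\{0,1\}$ --- the $n$-th binary digit of $a$ --- is computable, and $\{n:b_n=1\}$ is a computable set containing $A$ and disjoint from $B$, contradicting recursive inseparability. The ``in particular'' clause follows, since a computable domain together with computable $+^M,\times^M$ is exactly what has been refuted.

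The step I expect to need the most care --- though it involves no new idea --- is the extraction: one must verify that the bounded approximations $\alpha_s,\beta_s$ genuinely inherit $\PA$-provable disjointness (so the coding sentence is a theorem of $\PA$), that the dovetailed search halts for exactly one $j$, and that $(2^n)^M$ together with the small elements $(j)^M$ are themselves produced effectively from $0^M$ and the given tables for $+^M$ and $\times^M$. Each of these is routine bookkeeping around Tennenbaum's original argument.
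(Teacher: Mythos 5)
The paper does not prove this lemma at all: it is quoted as a known classical result with a citation to Tennenbaum's 1961 paper, so there is no in-text argument to compare against. Your sketch is essentially the standard textbook proof (recursively inseparable r.e.\ sets, $\SigmaOne$-completeness of $\PA$ for the finite coding requirements, overspill to a nonstandard stage $c$, then effective extraction of the coded separator from the hypothetical computable presentation), and it is correct for the statement as the paper phrases it, namely that $+^M$ and $\times^M$ cannot be \emph{jointly} computable. Two small points are worth flagging. First, your extraction reads off binary digits via Euclidean division by $2^n$, which uses $\times^M$; the full strength of Tennenbaum's theorem --- that $+^M$ \emph{alone} (and likewise $\times^M$ alone) is already non-computable --- needs the prime-divisibility coding together with an extraction that uses only repeated addition, but since the lemma here only asserts joint non-computability, your weaker extraction suffices. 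Second, the step ``because $c$ exceeds every standard integer, $n\in A$ forces $\mathrm{bit}(\bar n,a)=1$'' tacitly uses that the stage approximations $\alpha_s,\beta_s$ are provably monotone in $s$ (so that $M\models\alpha_{\bar t}(\bar n)$ for some standard $t$ yields $M\models\alpha_c(\bar n)$), together with absoluteness of bounded formulas with standard parameters between $\N$ and the initial segment $\omega\subseteq\omega^M$; you correctly classify this, and the effective location of $0^M,1^M$ and the numerals inside the presentation, as routine bookkeeping.
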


\section{The structural phenomenon ($\BHstruct$)}

\begin{definition}[BH-struct]\label{def:BHstruct}
For a sentence $\varphi$, define:
\[
\BHstruct(\varphi)\iff
\forall M\,(M\models\ZFC+\neg\varphi\ \Rightarrow\ \omega^M\neq\omega).
\]
\end{definition}

\begin{proposition}\label{prop:Pi01-nonstandard}
If $\varphi\equiv\forall n\,P(n)\in\Pi^0_1$ is true in $\N$, then $\BHstruct(\varphi)$ holds.
\end{proposition}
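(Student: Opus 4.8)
The plan is to argue by contraposition on the structure of a hypothetical falsifying model. Suppose $M \models \ZFC + \neg\varphi$ with $\omega^M = \omega$; I want to derive a contradiction with the truth of $\varphi$ in $\N$. Since $\varphi \equiv \forall n\, P(n)$ with $P$ primitive recursive, the sentence $\neg\varphi$ is (externally) $\Sigma^0_1$, so from $M \models \neg\varphi$ there is some $b \in M$ with $M \models \neg P(b)$. The first key step is to observe that, because $\omega^M = \omega$, this witness $b$ is in fact a standard natural number: the quantifier $\exists n$ ranges over $\omega^M$, which we have assumed equals $\omega$. So fix $b \in \omega$ with $M \models \neg P(\bar b)$.

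The second key step invokes $\Delta^0_0$-absoluteness (\cref{lem:abs}). Since $\neg P$ is primitive recursive (the negation of a primitive recursive predicate is primitive recursive) and $\bar b$ is a standard tuple, absoluteness gives $M \models \neg P(\bar b) \iff V \models \neg P(\bar b)$, hence $\N \models \neg P(\bar b)$. But this contradicts the hypothesis that $\varphi = \forall n\, P(n)$ is true in $\N$. Therefore no such $M$ exists with $\omega^M = \omega$; equivalently, every $M \models \ZFC + \neg\varphi$ has $\omega^M \neq \omega$, which is exactly $\BHstruct(\varphi)$.

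I expect the main (and essentially only) subtlety to be bookkeeping about the interface between the set-theoretic universe and its internal arithmetic: one must be careful that "$\omega^M = \omega$" is the correct hypothesis for applying \cref{lem:abs}, that the existential witness to $\neg\varphi$ genuinely lands in $\omega^M$ rather than in some other sort, and that $P$'s primitive recursiveness is preserved under negation so the absoluteness lemma applies verbatim. None of these is deep, but they are the points where a careless argument could slip. Tennenbaum's theorem (\cref{lem:Tennenbaum}) is not needed for this proposition itself—it is what gives the conclusion its bite (the nonstandard witness is computationally inaccessible)—so I would not invoke it in the proof, only in the surrounding discussion.
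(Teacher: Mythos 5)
Your proposal is correct and follows essentially the same route as the paper: assume $\omega^M=\omega$, extract a standard witness to $\neg P$, apply $\Delta^0_0$-absoluteness (\cref{lem:abs}), and contradict $\N\models\varphi$. Your side remarks (closure of primitive recursive predicates under negation, Tennenbaum's theorem not being needed here) are accurate and match the paper's usage.
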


\begin{proof}
Suppose $M\models\ZFC+\neg\varphi$. If $\omega^M=\omega$, then there exists some standard $k\in\omega$ such that $M\models\neg P(k)$. By \cref{lem:abs}, this implies $V\models\neg P(k)$, which contradicts the assumption $\N\models\varphi$. Thus, $\omega^M\neq\omega$.
\end{proof}

\section{Observational barriers ($\BHobs$)}

We treat a recursively axiomatized proof system $T$ containing arithmetic, with provability predicate $\Prov_T$ and consistency statement $\Con(T)$.

\begin{definition}[Self-description failure $\SDF(T,\varphi)$]\label{def:SDF}
Let $\varphi\equiv\forall n\,P(n)\in\Pi^0_1$ with $P$ primitive recursive. We say $\SDF(T,\varphi)$ holds if:
\begin{enumerate}[label=(\alph*)]
\item (\emph{Pointwise provability}) For each standard $n$, $T\vdash P(\bar n)$.
\item (\emph{Failure of Uniform Reflection}) $T\nvdash \forall n\,\Prov_T(\code{P(\bar n)})$.
\end{enumerate}
\end{definition}

\begin{remark}
Condition (b) asserts that $T$ cannot internalize its own pointwise capability regarding $P$.
\end{remark}

\begin{theorem}[Barrier theorem]\label{thm:barrier}
Suppose $T$ is a consistent, recursively axiomatized extension of $\PA$. If $\SDF(T,\varphi)$ holds, then
\[
\neg\Prov_T(\code{\varphi}).
\]
Thus, $\Con(T)$ implies $\neg\Prov_T(\code{\varphi})$, satisfying $\BHobs_T(\varphi)$.
\end{theorem}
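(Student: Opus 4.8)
The plan is to prove the contrapositive-style statement directly: assuming $\SDF(T,\varphi)$ holds, derive $\neg\Prov_T(\code{\varphi})$, and then observe that since $T$ extends $\PA$ and is recursively axiomatized, $\Con(T)$ is exactly the hypothesis that $T$ does not prove everything, so the implication $\Con(T)\Rightarrow\neg\Prov_T(\code{\varphi})$ follows once we have the unconditional $\neg\Prov_T(\code{\varphi})$ under the stated consistency of $T$. First I would record what condition (b) of $\SDF(T,\varphi)$ gives us: $T\nvdash\forall n\,\Prov_T(\code{P(\bar n)})$.

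Next, the main argument: suppose for contradiction that $\Prov_T(\code{\varphi})$, i.e.\ $T\vdash\forall n\,P(n)$. Since $P$ is primitive recursive and $T$ extends $\PA$ (hence $I\Sigma_1$), formalized $\Sigma^0_1$-completeness—exactly the fact cited in \cref{prop:barrier-compat}—gives $T\vdash\forall n\,(P(\bar n)\rightarrow\Prov_T(\code{P(\bar n)}))$. Combining this internalized implication with $T\vdash\forall n\,P(n)$ by modus ponens inside $T$ under the universal quantifier yields $T\vdash\forall n\,\Prov_T(\code{P(\bar n)})$. But this directly contradicts condition (b) of $\SDF(T,\varphi)$. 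Hence $T\nvdash\varphi$, which is $\neg\Prov_T(\code{\varphi})$ read at the meta-level (or, if one wants the formalized version, one notes $\Con(T)$ suffices to run the Gödel-style argument inside $T$ as well, but the clean statement is the metatheoretic one).

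Finally I would tie this to the $\BHobs_T(\varphi)$ packaging: the theorem's conclusion $\Con(T)\Rightarrow\neg\Prov_T(\code{\varphi})$ is immediate because the whole derivation was carried out under the standing assumption that $T$ is consistent, and consistency is precisely what licenses treating ``$T\vdash\psi$'' as a nontrivial condition; one can also phrase it so that $\Con(T)$ is what guarantees $\neg\Prov_T(\code{P(\bar n)})$ is not vacuously contradicted. The content of $\BHobs_T(\varphi)$ is then just the conjunction of $\SDF(T,\varphi)$ with this implication, so nothing further is needed.

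The step I expect to be the main (really the only) obstacle is making sure the formalized $\Sigma^0_1$-completeness step is applied at the right strength: one must verify that $T\supseteq\PA$ suffices to prove the \emph{uniform} internalized implication $\forall n\,(P(\bar n)\to\Prov_T(\code{P(\bar n)}))$, not merely each instance $P(\bar n)\to\Prov_T(\code{P(\bar n)})$ separately—otherwise the modus ponens under the quantifier does not go through and one cannot contradict (b). This is standard (it is the provable $\Sigma^0_1$-completeness schema, available already in $I\Sigma_1$), but it is the hinge of the proof, and it is worth citing \cite[§§I.1–I.2]{HaP} explicitly as in \cref{prop:barrier-compat} rather than leaving it implicit.
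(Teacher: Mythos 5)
Your proposal is correct and follows essentially the same route as the paper's own proof: assume $T\vdash\forall n\,P(n)$, invoke formalized $\Sigma^0_1$-completeness to get the uniform internalized implication $T\vdash\forall n\,(P(\bar n)\to\Prov_T(\code{P(\bar n)}))$, apply modus ponens under the quantifier, and contradict condition (b) of $\SDF(T,\varphi)$. Your closing remark correctly identifies the only delicate point—that the completeness schema must be proved uniformly in $n$, which $I\Sigma_1$ (hence $\PA$) supplies—and this is exactly the fact the paper cites from \cite{HaP}.
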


\begin{proof}
Suppose for the sake of contradiction that $T\vdash\varphi$, i.e., $T\vdash\forall n\,P(n)$.
By formalized $\Sigma^0_1$-completeness (see \cite{HaP}), $T$ verifies its own proofs of true $\Sigma^0_1$ statements (or decidable instances). Specifically,
\[
T \vdash \forall n \, (P(n) \to \Prov_T(\code{P(\bar n)})).
\]
Combining this with the assumption $T\vdash\forall n\,P(n)$, it follows that
\[
T \vdash \forall n \, \Prov_T(\code{P(\bar n)}).
\]
This contradicts condition (b) of $\SDF(T,\varphi)$. Hence, $T\nvdash\varphi$.
\end{proof}

\section{Undecidability has no observational semantics}

\begin{theorem}[The Semantic Void]\label{thm:semantic-void}
Let $\varphi\equiv\forall n\,P(n)\in\Pi^0_1$ be true in $\N$. Assume $\ZFC$ is consistent and $\varphi$ is undecidable in $\ZFC$ (i.e., $\ZFC\nvdash\varphi$ and $\ZFC\nvdash\neg\varphi$). Then:
\begin{enumerate}[label=(\roman*)]
\item Every $\omega$-standard model $M\models\ZFC$ satisfies $M\models\varphi$.
\item Any model $M\models\ZFC+\neg\varphi$ must be nonstandard ($\omega^M\neq\omega$), and any witness to $\neg\varphi$ is computationally inaccessible (by \cref{lem:Tennenbaum}).
\item For each fixed $N$, $\ZFC$ proves the finite window $\bigwedge_{n\le N} P(n)$, but $\ZFC$ need not (and in general does not) prove the uniform reflection $\forall n\, \Prov_{\ZFC}(\ulcorner P(\bar n)\urcorner)$. Thus nothing enforces the aggregation of pointwise knowledge into a global proof.
\end{enumerate}
Consequently, the statement ``$\varphi$ is undecidable'' expresses no observable falsity in the standard world; it marks a \emph{semantic void}---a structural shadow cast by the expressive limitations of the formal system, rather than the presence of a reachable counterexample.
\end{theorem}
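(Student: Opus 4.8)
The plan is to dispatch clauses (i) and (ii) from the structural machinery already assembled, to read clause (iii) off formalized $\Sigma^0_1$-completeness together with the undecidability hypothesis, and then to knit the three into the ``semantic void'' reading. Clause (i) is immediate: since $\varphi$ is true in $\N$, \cref{prop:Pi01-nonstandard} gives $\BHstruct(\varphi)$, and the contrapositive of \cref{def:BHstruct} says that any $M\models\ZFC$ with $\omega^M=\omega$ does not satisfy $\neg\varphi$, hence (as $M$ decides $\varphi$) satisfies $\varphi$; this uses only truth of $\varphi$, not undecidability. For clause (ii), the hypothesis $\ZFC\nvdash\varphi$ together with completeness produces a model $M\models\ZFC+\neg\varphi$ (so the clause is not vacuous). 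Passing to a countable elementary submodel $M_0\preceq M$ (Löwenheim--Skolem), we still have $M_0\models\ZFC+\neg\varphi$, so $\omega^{M_0}\neq\omega$ by \cref{prop:Pi01-nonstandard} and the reduct $(\omega^{M_0},+^{M_0},\times^{M_0})$ is a \emph{countable nonstandard} model of $\PA$, hence has no computable presentation by \cref{lem:Tennenbaum}. The least $M_0$-counterexample $n_0$ to $\forall n\,P(n)$ then lies above every standard number --- each $P(\bar k)$ with $k$ standard is a true $\Delta^0_0$ theorem of $\ZFC$ and so holds in $M_0$ --- so $n_0\in\omega^{M_0}\setminus\omega$: no finite search of $\N$ and no total computable function reaches it, and the arithmetic in which it sits is itself not computably presentable. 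That is the precise sense in which the internal ``witness'' carries no standard observational content.

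Clause (iii) is where I would be most careful. The finite-window half is routine: for fixed standard $N$, each $P(\bar n)$ with $n\le N$ is a true $\Delta^0_0$ (after the usual normalization, primitive recursive) sentence, hence a $\ZFC$-theorem by $\Sigma^0_1$-completeness, and a finite conjunction of theorems is a theorem, giving $\ZFC\vdash\bigwedge_{n\le N}P(\bar n)$. For the uniform-reflection half, \cref{prop:barrier-compat} gives $\ZFC\vdash\forall n\,(P(n)\to\Prov_{\ZFC}(\code{P(\bar n)}))$, so $\ZFC\vdash\varphi$ would \emph{force} $\ZFC\vdash\forall n\,\Prov_{\ZFC}(\code{P(\bar n)})$; equivalently, failure of this uniform reflection is exactly what re-derives $\ZFC\nvdash\varphi$ through \cref{thm:barrier}. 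But undecidability by itself delivers only $\ZFC\nvdash\varphi$ --- that pointwise provability does not aggregate --- and does not settle whether $\ZFC$ proves $\forall n\,\Prov_{\ZFC}(\code{P(\bar n)})$. So the content I would actually record is a dichotomy: either $\SDF(\ZFC,\varphi)$ (\cref{def:SDF}) holds and \cref{thm:barrier} supplies the unprovability directly, or $\ZFC$ does prove the uniform reflection yet still cannot cross to $\varphi$ because the converse principle $\forall n\,(\Prov_{\ZFC}(\code{P(\bar n)})\to P(n))$ --- uniform $\Sigma^0_1$-soundness for this predicate --- is itself unavailable in $\ZFC$ by Gödel/Löb. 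Either way ``nothing enforces the aggregation''.

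The concluding sentence is then synthesis, not new argument: (i) puts $\varphi$ true in the standard universe and in every $\omega$-standard model, so ``$\varphi$ undecidable'' predicts no observable falsity; (ii) forces any falsifying model to be nonstandard with its alleged counterexample above the standard cut, beyond computation by \cref{lem:Tennenbaum}; (iii) locates the proof-theoretic obstruction in a reflection/self-description gap rather than in a counterexample --- whence the label ``semantic void''.

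I expect the main obstacle to be the parenthetical ``(and in general does not)'' in clause (iii): it is not a theorem for \emph{every} true undecidable $\Pi^0_1$ sentence --- for metamathematical instances such as $\Con(\ZFC)$ one in fact has $\ZFC\vdash\forall n\,\Prov_{\ZFC}(\code{P(\bar n)})$, because failure of an instance would make $\ZFC$ inconsistent and hence prove everything --- so the claim must either be restricted to genuinely number-theoretic $\varphi$ (where it is expected, but needs instance-specific input, e.g.\ for the fixed arithmetical $\RH$) or be weakened to the unconditional ``$\ZFC$ need not'' plus the $\SDF$/barrier dichotomy above. A milder secondary point is making ``computationally inaccessible'' in (ii) precise enough to cite \cref{lem:Tennenbaum} cleanly: the reduction to a countable elementary submodel, and the observation that the counterexample is not merely hard to compute but simply absent from $\omega$.
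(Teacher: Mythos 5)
Your proposal is correct and follows essentially the argument the paper leaves implicit (the theorem is stated without an explicit proof): (i) and (ii) come from \cref{prop:Pi01-nonstandard} together with completeness and \cref{lem:Tennenbaum}, and (iii) from formalized $\Sigma^0_1$-completeness as in \cref{prop:barrier-compat}. Your two cautions are well placed and consistent with the paper itself: the Löwenheim--Skolem reduction is genuinely needed because \cref{lem:Tennenbaum} concerns countable models, and your objection to the parenthetical ``(and in general does not)'' is exactly what the paper's own remark after the theorem concedes via the $\Con(\ZFC)$ example, so reading clause (iii) as the unconditional ``need not'' plus your $\SDF$/soundness dichotomy is the defensible form of the claim.
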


\begin{remark}
Whether $\ZFC$ proves the Uniform Reflection Principle $\forall n\,\Prov_{\ZFC}(\code{P(\bar n)})$ depends on the specific nature of $\varphi$. For Gödelian sentences like $\Con(\ZFC)$, uniform reflection is provable despite the sentence being undecidable. However, for ``natural'' combinatorial or number-theoretic independence, there is no reason to assume $\ZFC$ can bridge the gap between instance-wise proofs and uniform verification. Our framework $\SDF$ captures this strictly stronger form of inaccessibility.
\end{remark}

\section{Related Work}\label{sec:related}

\paragraph{Model-theoretic background: standard cuts and computational inobservability.}
$\Delta^0_0$-absoluteness for $\omega$-standard models is laid out in \cite{Kaye}, and the computational inaccessibility of nonstandard $\PA$ models is due to Tennenbaum \cite{Tennenbaum}. We reorganize these classical observations through the lens of ``observability'' by introducing $\BHstruct$ and emphasizing that any false instance of a true $\Pi^0_1$ sentence must escape the standard cut; this perspective is summarized in \cref{prop:Pi01-nonstandard}. For an accessible discussion of these ingredients in a foundational context, see also Smith \cite{Smith}.

\paragraph{Proof-theoretic background: reflection and self-description.}
The architecture of reflection principles (local, uniform, and iterated) is developed in \cite{HaP,Beklemishev05,Feferman62}, while provability logic and derivability conditions are surveyed in \cite{Boolos}. These works focus on the full schemas and on the ambient soundness of the base theory. In contrast, our notion $\SDF(T,\varphi)$ isolates the failure of internal self-description for a single $\Pi^0_1$ sentence and drives the barrier theorem \cref{thm:barrier} without appealing to any global reflection scheme.

\paragraph{$\Pi^0_1$ independence and computational viewpoints.}
Classical analyses of $\Pi^0_1$ independence draw on undecidable problems such as the halting problem and on the gap between search and proof; see \cite{HaP,Smith}. Our contribution recasts such undecidability as a \emph{product structure} of observational and self-description layers: $\BHstruct$ monitors the model-theoretic side, while $\SDF$ captures the proof-theoretic side, and together they yield the semantic void.

\paragraph{Arithmetical representations of the Riemann Hypothesis.}
Lagarias \cite{Lagarias} provides an elementary inequality equivalent to the Riemann Hypothesis, allowing RH to be treated as a mechanically checkable $\Pi^0_1$ sentence. Our case study in \cref{sec:RH} fixes such a representative and focuses exclusively on the structural consequences described by $\BHstruct$ and $\SDF$; we do not claim any new result about the truth, falsity, or independence of RH itself.

\section{Case study: Riemann Hypothesis}\label{sec:RH}

Let $\RH$ be a fixed $\Pi^0_1$ sentence (e.g., via Lagarias's elementary inequality \cite{Lagarias}) equivalent to the classical Riemann Hypothesis.
Assuming $\RH$ is true in $\N$ but undecidable in $\ZFC$:
\begin{itemize}
\item $\BHstruct(\RH)$ implies that any counterexample to $\RH$ is hidden behind the standard cut.
\item $\BHobs_\ZFC(\RH)$ implies that the failure to prove $\RH$ is structurally aligned with the system's inability to uniformly reflect on its own verification of finite instances.
\end{itemize}
We claim nothing about the actual truth value or independence of $\RH$; we merely describe the necessary structural consequences of its hypothetical undecidability.

\section{Conclusion}

We have formalized the intuition that undecidable $\Pi^0_1$ sentences represent a ``horizon'' of provability. Standard-cut isolation ($\BHstruct$) confirms that the ``false'' branch describes a computationally inaccessible realm. Simultaneously, the gap in uniform reflection ($\SDF$) explains the gap between local verifiability and global provability ($\BHobs$).
Thus, undecidability for $\Pi^0_1$ sentences signals not a hidden mathematical object, but a \emph{semantic void} created by the non-commutativity of truth and provability across the infinite domain.

\section*{Acknowledgments}

The author made extensive use of the language model ChatGPT (OpenAI) during the preparation of this manuscript. It was used for drafting and rewriting portions of the text, for generating alternative phrasings and explanations, and for checking the clarity of some arguments. All mathematical ideas, results, and proofs are solely the responsibility of the author.

\end{document}